\newtheorem{theorem}{Theorem}
\newtheorem{proposition}[theorem]{Proposition}
\newtheorem{problem}{Problem}
\begin{document}

\title{Minimal digraph obstructions for small matrices\thanks{Both authors were supported by the NSERC Discovery Grant of the first author, who was additionally supported by the grant ERCCZ LL 1201. Also, part of this work was done while the first author was visiting the Simons Institute for the Theory of Computing.}}

\author{Pavol Hell\thanks{email: pavol@sfu.ca} \\ {\small School of Computing Science \\ Simon Fraser University} \and
C\'esar Hern\'andez-Cruz\thanks{email: chc@ciencias.unam.mx (Corresponding Author)} \\ {\small Instituto de Matem\'aticas\\Universidad Nacional Aut\'onoma de M\'exico} \and
}

\maketitle
\begin{abstract}
Given a $\{ 0, 1, \ast \}$-matrix $M$, a minimal
$M$-obstruction is a digraph $D$ such that $D$
is not $M$-partitionable, but every proper induced
subdigraph of $D$ is.   In this note we present a
list of all the $M$-obstructions for every $2 \times
2$ matrix $M$.

Notice that this note will be part of a larger paper,
but we are archiving it now so we can cite the results.
\end{abstract}

\section{Introduction}

Given an $m$ by $m$ matrix $M$ over $0, 1, *$
(a {\em pattern}), an $M$-{\em partition} of a
digraph $G$ is a partition of the vertices into
parts $V_1, V_2, \dots, V_m$ such that two
distinct vertices in $V_i$ are non-adjacent if
$M_{i,i}=0$, and adjacent in both directions if
$M_{i,i}=1$ ($M_{i,i}=*$ represents no restriction).
Similarly, each vertex in $V_i$ must (respectively
must not) dominate each vertex in $V_j$ if $M_{i,j}
=1$ (respectively $M_{i,j}=0$).

Given a pattern $M$, the {\em $M$-partition
problem} is the decision problem of determining
whether a digraph admits an $M$-partition.
Notice that if we regard (undirected) graphs as
digraphs in which every arc is a digon, the
$M$-partiton problem is also defined for graphs
when $M$ is a symmetric matrix.   Many well
known problems in graph theory can be posed
as $M$-partition problems, e.g., a $\left(
\begin{array}{cc} 0 & \ast \\ \ast & 0 \end{array}
\right)$-partition is just a bipartition, and a $\left(
\begin{array}{cc} 0 & \ast \\ \ast & 1 \end{array}
\right)$ is a split partition.

Observe that having an $M$-partition is an
hereditary property, and thus, $M$-partitionable
digraphs can be characterized through a set of
forbidden induced subdigraphs.   A {\em minimal
obstruction to $M$-partition}, or {\em minimal
$M$-obstruction} for short, is a digraph that does
not admit an $M$-partition but such that every
proper induced subdigraph does.   Clearly, if
a pattern $M$ has only a finite number of minimal
obstructions, then the $M$-partition problem is
polynomial time solvable.   Nonetheless, there
are patterns $M$ with infinitely many minimal
$M$-obstructions and with a polynomial time
solvable $M$-partition problem, e.g. the
bipartition problem.

There are two main
problems associated with the concept of an
$M$-partition.

\begin{problem}[The Characterization Problem]
Which patterns $M$ have the property that the
number of mimimal $M$-obstructions is finite.
\end{problem}

\begin{problem}[The Complexity Problem]
Which patterns $M$ have the property that the
$M$-partition problem can be solved by a
polynomial time algorithm?
\end{problem}

We refer the reader to \cite{survey} for a survey
on the subject.

The main goal of this note is to give a full list of
minimal $M$-obstructions for every $2 \times 2$
pattern $M$.   Although it is already known that
the $M$-partition problem for such patterns is
polynomial time solvable, it is useful to have the
exact list of minimal $M$-obstructions.   Such
list has been already used in \cite{PointDet}
(where it was meant to be originally included),
and recently in \cite{HKerComp}.

We refer the reader to \cite{BJD} and \cite{BM}
for general concepts.   In this work, $D = (V_D,
A_D)$ will be a digraph with the vertex set $V_D$
and the arc set $A_D$, without loops of multiple
arcs in the same direction.   If $A_D = \varnothing$
we say that $D$ is an {\em empty digraph}.   The
{\em dual} of $D$ is the digraph $\overleftarrow{D}$
obtained from $D$ by reversing each of its arcs.
We will denote the underlying graph of $D$ by
$G_D$.   The {\em complement} of $D$ is the
digraph $\overline{D}$ with $V_{\overline{D}} =
V_D$ and such that $(x,y) \in A_{\overline{D}}$
if and only if $(x,y) \notin A_D$.

When $(x,y) \in A_D$ ($(x,y) \notin A(D)$) we will
denote it by $x \to y$ ($x \not \to y$).   We will say
that an arc $(x,y)$ is a {\em digon} if $y \to x$;
otherwise, we will say that $(x,y)$ is an {\em
asymmetric} arc.   If $x \to y$ we say that $x$ is
an {\em in-neighbour of $y$} and $y$ is an {\em
out-neighbour of $x$}.   The {\em in-neigbourhood}
({\em out-neighbourhood}) of a vertex $v$, $N^-(v)$
($N^+(v)$) is the set of all its in-neighbours
(out-neighbours).   The {\em neighbourhood} of
$v$, $N(v)$ is defined as $N(v) = N^- (v) \cup N^+
(v)$.   For vertices $x$ and $y$, we say that
$x$ is {\em adjacent} to $y$ if $y \in N(x)$.

Given a graph $G$, a {\em superorientation} of
$G$ is obtained by replacing each edge $xy$ in
$G$ with $(x,y)$, $(x,y)$ or both of them.   An
{\em orientation} of $G$ is a superorientation
of $G$ without digons.   A {\em biorientation} of
$G$ is a superorientation of $G$ where every
arc is a digon; the (unique up to isomorphism)
biorientation of $G$ is denoted by
$\overleftrightarrow{G}$.   A subset $S$ of $V_D$
is a {\em strong clique} if it induces a biorientation
of a complete graph in $D$.   We will often abuse
language and say that a strong clique on two
vertices is a digon.   A digraph is {\em strict split}
if $V_D$ admits a partition $(V_0, V_1)$ such
that $V_0$ is an independent set and $V_1$
is a strong clique.

The disjoint union of $D_1$ and $D_2$ is denoted
by $D_1 + D_2$.

\section{Main results}

It follows from \cite{FHKM} that for every two by two
matrix $M$ the recognition of $M$-partitionable
digraphs is possible in polynomial time by reducing
the problem to 2-SAT. If $M$ has an asterisk on
the main diagonal, every digraph has an $M$-partition.
To reduce the possibilities when there are no
asterisks on the main diagonal, we present two
simple results.

Let $\overline{M}$ denote the pattern obtained
from $M$ by replacing each entry $0$ by $1$
and vice versa.   The following result is easy to
verify.

\begin{proposition} \label{complement}
A partition of $V_D$ is an
$\overline{M}$-partition of $D$ if and only if
it is an $M$-partition of $\overline{D}$.
\end{proposition}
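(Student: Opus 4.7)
The plan is to unpack the definitions cell-by-cell. Both statements ``the partition is an $\overline{M}$-partition of $D$'' and ``the partition is an $M$-partition of $\overline{D}$'' are, by definition, conjunctions of independent constraints indexed by the entries $(i,j)$ of the pattern. So it suffices to verify, for each fixed pair of parts $V_i, V_j$, that the constraint imposed by $\overline{M}_{i,j}$ on $D$ is equivalent to the constraint imposed by $M_{i,j}$ on $\overline{D}$.

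I would split the verification into two cases. For an off-diagonal cell $i \neq j$, the constraint concerns the individual arc $(u,v)$ for each $u \in V_i$, $v \in V_j$: the entry $0$ forbids the arc, the entry $1$ requires it, and $*$ imposes nothing. Since $(u,v) \in A_{\overline{D}}$ iff $(u,v) \notin A_D$, requiring the arc in $\overline{D}$ is the same as forbidding it in $D$ and vice versa. This is exactly the swap $0 \leftrightarrow 1$ defining $\overline{M}$, while $*$ is preserved, so the constraints match.

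For a diagonal cell $i = j$, the constraint on distinct $u, v \in V_i$ is that $\{u,v\}$ induces a digon if $M_{i,i} = 1$, is non-adjacent (no arc in either direction) if $M_{i,i} = 0$, and is unrestricted if $M_{i,i} = *$. Because complementation flips each directed arc independently, $u, v$ form a digon in $\overline{D}$ precisely when $u, v$ are non-adjacent in $D$ and vice versa; hence once again the effect on the matrix is exactly the swap $0 \leftrightarrow 1$ with $*$ preserved, matching the definition of $\overline{M}$.

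There is no real obstacle: the proof is a direct calculation from the definitions. The only point to handle with a little care is the asymmetry between diagonal entries (which constrain both directions of a pair simultaneously) and off-diagonal entries (which constrain a single direction), but since arc complementation acts independently on each ordered pair, both cases go through in the same way.
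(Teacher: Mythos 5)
Your proof is correct and is precisely the direct cell-by-cell verification that the paper leaves to the reader (the paper gives no proof, stating only that the result ``is easy to verify''). The case split between diagonal entries (constraining both directions of a pair) and off-diagonal entries (constraining a single ordered pair) is handled properly, and the observation that complementation swaps the roles of $0$ and $1$ while fixing $*$ is exactly the intended argument.
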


A similar result can be obtained for
$\overleftarrow{D}$ and the transpose $M^t$
of $M$.

\begin{proposition} \label{transpose}
A partition of $V_D$ is an $M^t$-partition
of $D$ if and only if it is an $M$-partition of
$\overleftarrow{D}$.
\end{proposition}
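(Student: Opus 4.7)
The plan is to verify the biconditional entry by entry in the matrix, checking that each constraint that $M^t$ imposes on a partition of $D$ coincides with the constraint that $M$ imposes on the same partition of $\overleftarrow{D}$. Since this is a direct translation lemma in the spirit of Proposition \ref{complement}, there is no real obstacle beyond careful bookkeeping of indices and arc directions.

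First I would handle the diagonal entries. Because $(M^t)_{i,i} = M_{i,i}$, the constraint on each part $V_i$ is the same for both matrices. What must be observed is that the three possible constraints (no restriction, independent set, strong clique) are all invariant under reversing arcs: $V_i$ is independent in $D$ iff it is independent in $\overleftarrow{D}$, and $V_i$ induces a strong clique in $D$ iff it does so in $\overleftarrow{D}$, since digons are exchanged with digons and non-arcs with non-arcs. Hence the $(i,i)$-constraint holds in $D$ under $M^t$ iff it holds in $\overleftarrow{D}$ under $M$.

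Next I would handle the off-diagonal entries by using the two facts $(M^t)_{i,j} = M_{j,i}$ and $(x,y) \in A_D$ iff $(y,x) \in A_{\overleftarrow{D}}$. If $(M^t)_{i,j}=1$, the constraint is that every vertex of $V_i$ dominates every vertex of $V_j$ in $D$, which is equivalent to saying every vertex of $V_j$ dominates every vertex of $V_i$ in $\overleftarrow{D}$, i.e.\ the constraint imposed by $M_{j,i}=1$ on the pair $(V_j,V_i)$ in $\overleftarrow{D}$. The case $(M^t)_{i,j}=0$ is identical with ``dominates'' replaced by ``does not dominate'', and the case $(M^t)_{i,j}=*$ is vacuous on both sides.

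Combining the diagonal and off-diagonal analyses, the partition $(V_1,\ldots,V_m)$ satisfies every $M^t$-constraint in $D$ if and only if it satisfies every $M$-constraint in $\overleftarrow{D}$, which is the statement of the proposition.
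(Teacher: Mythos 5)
Your verification is correct and is exactly the routine entry-by-entry check the paper has in mind: the paper omits the proof entirely, remarking only that the result is obtained "similarly" to Proposition \ref{complement}, which it calls easy to verify. Your use of $(M^t)_{i,j}=M_{j,i}$ together with $(x,y)\in A_D \Leftrightarrow (y,x)\in A_{\overleftarrow{D}}$, plus the observation that the diagonal constraints are invariant under arc reversal, supplies precisely the bookkeeping the authors left implicit.
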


It follows from Proposition \ref{complement}
that a digraph $D$ is a minimal $M$-obstruction
if and only if $\overline{D}$ is a minimal
$\overline{M}$-obstruction.   Analogously,
it follows from Proposition \ref{transpose}
that a digraph $D$ is a minimal $M$-obstruction
if and only if $\overleftarrow{D}$ is a minimal
$M^t$-obstruction.

There are $36$ different $2 \times 2$ patterns
with the required properties.   Nonetheless, it
follows from Propositions \ref{complement}
and \ref{transpose}, and a simple additional
analysis when $M_{11} = 0$ and $M_{22}
= 1$, that there are exactly $10$ such
patterns with essentially different sets of minimal
obstructions, $$M_1 =  \left( \begin{array}{cc} 0
& \ast \\ \ast & 0 \end{array} \right), \hspace{1cm}
M_2 = \left( \begin{array}{cc} 0 & 0 \\ \ast & 0
\end{array} \right), \hspace{1cm} M_3 = \left(
\begin{array}{cc} 0 & 1 \\ \ast & 0 \end{array}
\right),$$ $$M_4 = \left( \begin{array}{cc} 0 & 0
\\ 1 & 0 \end{array} \right), \hspace{1cm} M_5 =
\left( \begin{array}{cc} 0 & 1 \\ 1 & 0 \end{array}
\right), \hspace{1cm} M_6=\left( \begin{array}{cc}
0 & 0 \\ 0 & 0 \end{array} \right),$$ $$M_7 = \left(
\begin{array}{cc} 0 & \ast \\ \ast & 1 \end{array}
\right), \hspace{1cm} M_8 = \left( \begin{array}{cc}
0 & 0 \\ \ast & 1 \end{array} \right), \hspace{1cm}
M_9 = \left( \begin{array}{cc} 0 & 1 \\ 0 & 1 \end{array}
\right),$$ $$M_{10} = \left( \begin{array}{cc} 0 & 0 \\
0 & 1 \end{array} \right).$$

Thus, it suffices to analyze the minimal
$M_i$-obstructions for $1 \le i \le 10$.
We begin by analyzing the case when
our pattern has only zeros in the main
diagonal.   Notice first that a digraph
admits an $M_1$-partition if and only
if it is bipartite, which happens if and
only if its underlying graph is bipartite.
Thus, the digraph minimal
$M_1$-obstructions are all the possible
superorientations of every (undirected)
odd cycle.   We will show in the following
theorem that this is the only $2 \times 2$
matrix with zero diagonal and infinitely
many minimal obstructions.   The sets
of minimal $M_i$-obstructions for
$2 \le i \le 6$ are depicted in Figure
\ref{0**0Fig}.   In this figure, an edge
between a pair of vertices means that
an arc must be present between them,
and it can be oriented either way or as
a digon.   For each $i \in \{2, \dots, 6 \}$,
let us refer to the digraphs corresponding
to $M_i$ in Figure \ref{0**0Fig} as
$\mathcal{F}_i$.

\begin{theorem} \label{0**0}
Suppose $M$ is a two by two matrix with
zero diagonal different from $M_1$.
There are a finite number of minimal
$M$-obstructions, which are depicted in
Figure \ref{0**0Fig} for every possible $M$.
\begin{figure}[H]
\begin{center}
\includegraphics[width=\textwidth]{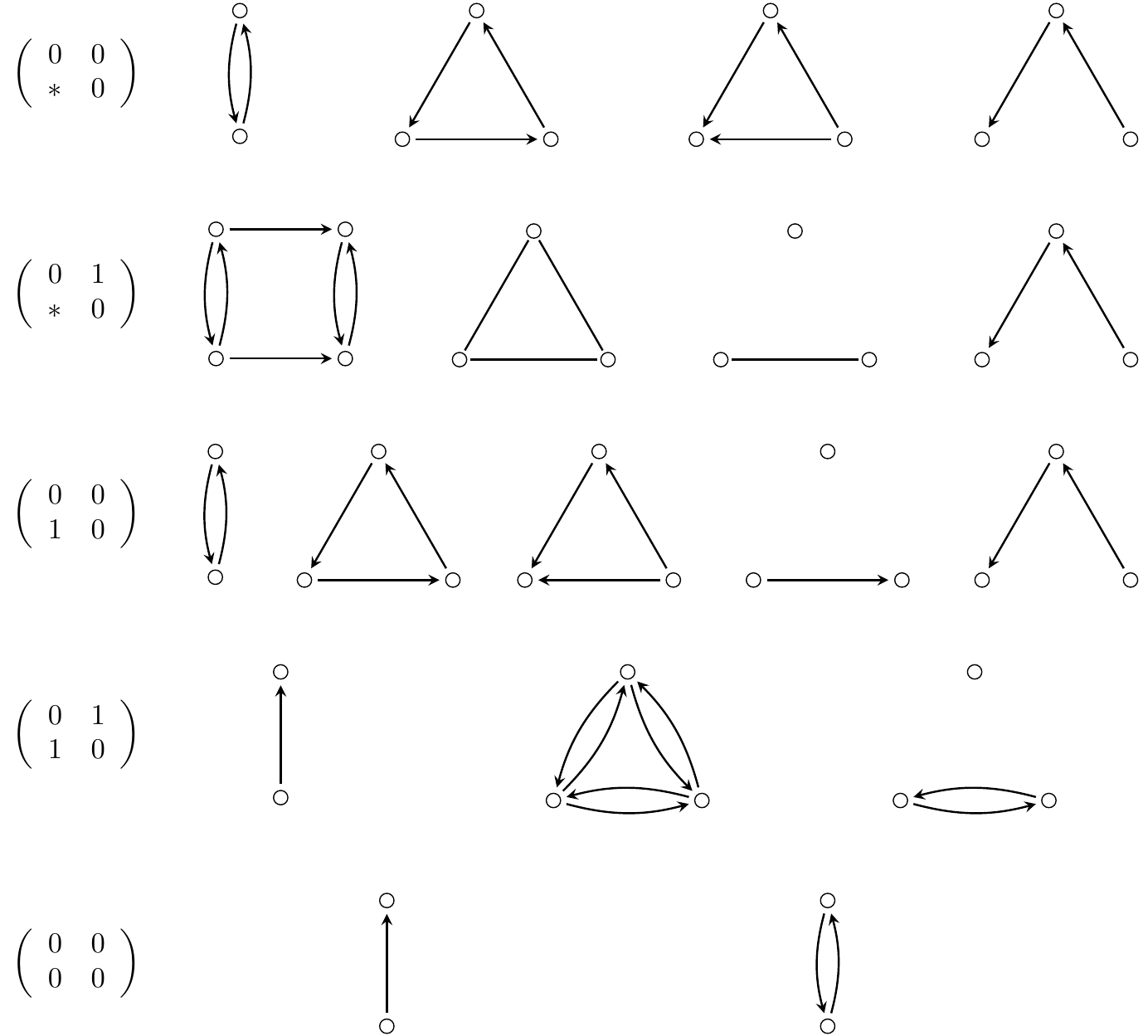}
\end{center}\caption{Minimal obstructions
for matrices in Theorem \ref{0**0}.} \label{0**0Fig}
\end{figure}
\end{theorem}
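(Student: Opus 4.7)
For each matrix $M_i$ with $i \in \{2, \ldots, 6\}$, the plan is to unfold the partition condition into a simple structural characterization of $M_i$-partitionable digraphs, and then read off the minimal obstructions from this characterization. The feature distinguishing these matrices from $M_1$ is that each has a non-$\ast$ entry off the diagonal. Any such entry imposes a local constraint which, once both parts of the bipartition are nonempty, can be violated by a small configuration of arcs; consequently the minimal obstructions end up bounded in size.

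The extremal cases are easiest. For $M_6$, the partitionable digraphs are precisely the empty digraphs, so the minimal obstructions are the two 2-vertex digraphs carrying a single arc (the digon and the asymmetric arc). For $M_5$, unpacking the definition shows that $D$ is $M_5$-partitionable if and only if $D$ is a biorientation of a complete bipartite graph; consequently the minimal obstructions are the asymmetric arc (witnessing failure of biorientation) together with the biorientations of $K_3$ and $K_2 + K_1$ (the biorientations of the two minimal non-complete-bipartite graphs).

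For $M_2$, $M_3$, and $M_4$, I would derive analogous local characterizations. For $M_2$, placing a vertex in $V_1$ forces it to have no out-neighbour and placing it in $V_2$ forces it to have no in-neighbour; hence $D$ is $M_2$-partitionable if and only if $D$ has no digon and every vertex is a pure source or pure sink. Any violation is then witnessed either by a 2-vertex digon or by a three-vertex configuration $u \to v \to w$, and enumerating the possibilities for the arc between $u$ and $w$ (none, $u \to w$, $w \to u$) produces the directed path, the transitive tournament, and the directed $3$-cycle. The matrices $M_3$ and $M_4$ additionally require complete bipartite domination between the two parts (this is the role of the $1$ entry), which contributes further obstructions such as an arc plus an isolated vertex and, in the case of $M_3$, every superorientation of $K_3$; the details come out of the same local analysis.

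The main obstacle is the case-by-case bookkeeping: for each $M_i$ one must both verify that every listed digraph is a minimal obstruction—by deleting each vertex in turn and exhibiting an $M_i$-partition of what remains—and check that no minimal obstruction has been missed. The latter is feasible because in each case the characterization is sufficiently local that any minimal obstruction has at most three vertices, so the enumeration is finite though tedious. Where a symmetry of $M_i$ is available, Propositions~\ref{complement} and~\ref{transpose} can shortcut some of the verifications.
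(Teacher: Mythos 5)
Your overall strategy---characterize the $M_i$-partitionable digraphs structurally and read the minimal obstructions off that characterization---is essentially the paper's strategy for $M_3$ through $M_6$, and your source/sink characterization for $M_2$ is a clean variant of the paper's vertex-deletion argument. However, the step you rely on to make the enumeration finite contains a genuine error: the claim that ``in each case the characterization is sufficiently local that any minimal obstruction has at most three vertices'' is false for $M_3$. Consider the digraph on $\{x, x', y, y'\}$ whose underlying graph is the $4$-cycle $x\,y\,x'\,y'\,x$, with digons between $x$ and $y'$ and between $x'$ and $y$, a single arc $x \to y$, and a single arc $y' \to x'$. The only way to split these four vertices into two independent sets is $\{x,x'\}$ versus $\{y,y'\}$; taking $V_1 = \{x,x'\}$ fails because $x' \not\to y'$, and taking $V_1 = \{y,y'\}$ fails because $y \not\to x$, so the digraph is not $M_3$-partitionable, while deleting any one vertex leaves an $M_3$-partitionable digraph. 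It is therefore a minimal $M_3$-obstruction on four vertices---precisely the four-vertex member of $\mathcal{F}_3$ that the paper's proof isolates---and your enumeration capped at three vertices would miss it.

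The reason the heuristic fails here, and only here, is that the constraint imposed by the $1$ entry of $M_3$ is not symmetric in the two parts: once the underlying graph is forced to be complete bipartite with sides $X$ and $Y$, what remains to be satisfied is that \emph{some} choice of which side plays the role of $V_1$ puts the forced arc on every cross pair. A single ``wrongly oriented'' asymmetric arc can always be repaired by swapping the roles of $X$ and $Y$; an irreparable configuration requires two asymmetric arcs pointing in opposite senses across the bipartition, and when these involve four distinct vertices (with the two remaining cross pairs being digons, so that no three-vertex obstruction is induced) the minimal witness genuinely has four vertices. To repair your proof you must carry out this final step of the local analysis explicitly, as the paper does, rather than asserting a uniform bound of three. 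The remainder of your outline is sound for $M_2$, $M_4$, $M_5$ and $M_6$, where the three-vertex bound does hold.
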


\begin{proof}
It is easy to verify that members of
$\mathcal{F}_i$ are minimal $M_i$-obstructions
for $2 \le i \le 6$. It is clear as well that every
other digraph on two or three vertices is not
a minimal $M_i$-obstruction.

Suppose that $D$ is a minimal
$M_2$-obstruction on at least $4$ vertices.
Since a digon is a minimal $M_2$-obstruction,
we can assume that $D$ is an oriented graph.
Let $v$ be an arbitrary vertex of $D$, and
let $(V_1, V_2)$ be an $M_2$-partition of
$D - v$.   Observe that the neighbourhood of
$v$ must be an independent set, otherwise,
there would be a tournament on three
vertices properly contained in $D$, which
is already a minimal $M_2$-obstruction.
Since the directed path of length $2$ is also
a minimal $M_2$-obstruction, then either
$N^+ (v) = \varnothing$ or $N^- (v) =
\varnothing$.   It cannot be the case that
the neighbourhood of $v$ is empty, because
$(V_1 \cup \{ v \}, V_2)$ would be an
$M_2$-partition of $D$.   Suppose first
that $N^+ (v) = \varnothing$.   If $N^- (v)
\subseteq V_2$, then $(V_1 \cup \{ v \},
V_2)$ is an $M_2$-partition, a contradiction.
Hence, $N^- (v) \cap V_1 \ne \varnothing$.
If some vertex in $N^- (v) \cap V_1$ has
positive indegree, then there is an induced
directed path of length $2$ in $D$,
contradicting the minimality of $D$.   Thus,
$V_2' = V_2 \cup (N^-(v) \cap V_1)$ is an
independent set.   If $V_1' = (V_1 \cup \{ v \})
\setminus N^- (v)$, then $(V_1',  V_2')$
is an $M_2$-partition of $D$, a contradiction.
Thus, $N^+ (v) \ne \varnothing$.   A very
similar argument shows that we also reach
a contradiction when $N^- (v) = \varnothing$.
Since the contradiction comes from assuming
that there exists a minimal $M_2$-obstruction
of order greater than three, we conclude that
all the minimal $M_2$-obstructions have order
at most three, and hence, they are precisely
the digraphs in $\mathcal{F}_2$.

Clearly, an $M_5$-partitionable digraph is
either an empty digraph or a biorientation
of a complete bipartite graph.   Thus, it suffices
to show that a non-empty graph which contains
neither $K_3$, nor $K_1 + K_2$ as an induced
subgraph, is a complete bipartite graph.
Let $G$ be such a graph.   Since $G$ does
not contain $K_1 + K_2$ an an induced
subgraph, it is clear that $G$ does not
contain any cycle of length greater than
$4$ as an induced subgraph.   Thus, $G
= (X, Y)$ is a bipartite graph.   If $|X| = 1$
or $|Y| = 1$, it is direct to verify that $G$
is complete bipartite.   Since $G$ is
non-empty, we can choose an edge $xy$
of $G$ with $x \in X$ and $y \in Y$.   Let
$x' \in X$ and $y' \in Y$ be arbitrarily chosen.
The edges $x'y$ and $xy'$ must be present
in $G$, otherwise, a $K_1 + K_2$ would be
an induced subgraph of $G$.   But then, the
edge $x' y'$ must also be present in $G$,
else, $\{ x, x', y' \}$ would induce a $K_1 +
K_2$ in $G$.   Since the choice of $x'$ and
$y'$ is arbitrary, we conclude that $G$ is a
complete bipartite graph.

Let $D$ be an $\mathcal{F}_4$-free digraph.
Since the digon is an element of
$\mathcal{F}_4$, $D$ is an oriented graph.
Moreover, the underlying graph of $D$, $G_D$,
contains neither $K_3$ nor $K_1 + K_2$,
because $\mathcal{F}_4$ contains the two
tournaments on three vertices and the digraph
on three vertices consisting of an isolated
vertex and an arc.   Thus, following the
argument used in the previous case, we
conclude that $D$ is either an empty digraph,
or $G_D = (X, Y)$ is a complete bipartite graph.
In the latter case, every arc of $D$ is oriented,
without loss of generality, from $X$ to $Y$,
otherwise, there would be a directed path
of length $2$ as an induced subdigraph of
$D$, but $D$ is $\mathcal{F}_4$-free.   Thus,
$(X, Y)$ is an $M_4$-partition of $D$.

If $D$ is an $\mathcal{F}_3$-free digraph,
then, as in the two previous cases, we derive
that either $D$ is an empty digraph, or the
underlying graph of $D$, $G_D = (X, Y)$, is
a complete bipartite graph.   If all the arcs
of $D$ are digons, then $(X, Y)$ is an
$M_3$-partition, and we are done.   Assume
without loss of generality that $x \to y$ and
$y \not \to x$ for some $x \in X$ and $y \in Y$.
Suppose for a contradiction that there are
$x' \in X$ and $y' \in Y$ such that $y' \to x'$
and $x' \not \to y'$.   Since $D$ does not
contain directed paths of length $2$ as
induced subdigraphs, we have $x \ne x'$
and $y \ne y'$; with the same argument we
conclude that the arcs $(x, y')$ and $(x', y)$
are digons.   But, $\{ x, x', y, y' \}$ induces
the only digraph on four vertices in
$\mathcal{F}_3$, a contradiction. Therefore,
all the arcs from $X$ to $Y$ are present
in $D$, and thus, $(X, Y)$ is an
$M_3$-partition of $D$.

When $M = M_6$, an $M$-partitionable
digraph $D$ is just an empty graph.   Hence,
it is clear that the only minimal
$M_6$-obstructions are an asymmetric
arc and a digon, this is, $\mathcal{F}_6$.
\end{proof}

We conclude this note with the analysis of
the patterns having a $0$ and a $1$ in the
main diagonal.   The pattern $M_7$ has been
already studied in \cite{strict}; an $M_7$-partition
corresponds to a strict split partition, this is, a
partition $(V_0, V_1)$ with $V_0$ an independent
set and $V_1$ a strong clique.   Refer to \cite{strict},
for the complete list of minimal $M_7$-obstructions.
Together with the aforementioned result for $M_7$,
our next result shows that for these kind of patterns,
there are always finitely many minimal obstructions.

For $8 \le i \le 10$, it is clear that
$\overleftrightarrow{K_3}$ and its complement are
$M_i$-partitionable.   If $i \in \{ 8, 9 \}$, then it is not
hard to verify that every other digraph on three
vertices is a minimal $M_i$-obstruction, except for
the digraphs depicted in Figure \ref{01DF}.   If $i =
10$, then clearly the asymmetric arc is a minimal
$M_i$-obstruction, hence, every other obstruction
is a biorientation of some (undirected) graph.
Define $\mathcal{F}_i$ in the following way.
\begin{itemize}
	\item $\mathcal{F}_8$ consists of the
		aforementioned minimal $M_8$-obstructions
		on $3$ vertices, together with all the
		superorientations of $2K_2$.
	
	\item $\mathcal{F}_9$ consists of the
		aforementioned minimal $M_9$-obstructions
		on $3$ vertices.
		
	\item $\mathcal{F}_{10}$ consists of the
		asymmetric arc,
		$\overleftrightarrow{2K_2}$, and the
		biorientations of all graphs on three
		vertices, except for $K_1 + K_2$.
\end{itemize}

It is direct to verify that every digraph in $\mathcal{F}_i$
is indeed a minimal $M_i$-obstruction, for $8 \le i \le 10$.
It comes as no surprise that these are the only minimal
$M_i$-obstructions.

\begin{figure}[h]
\begin{center}
\includegraphics{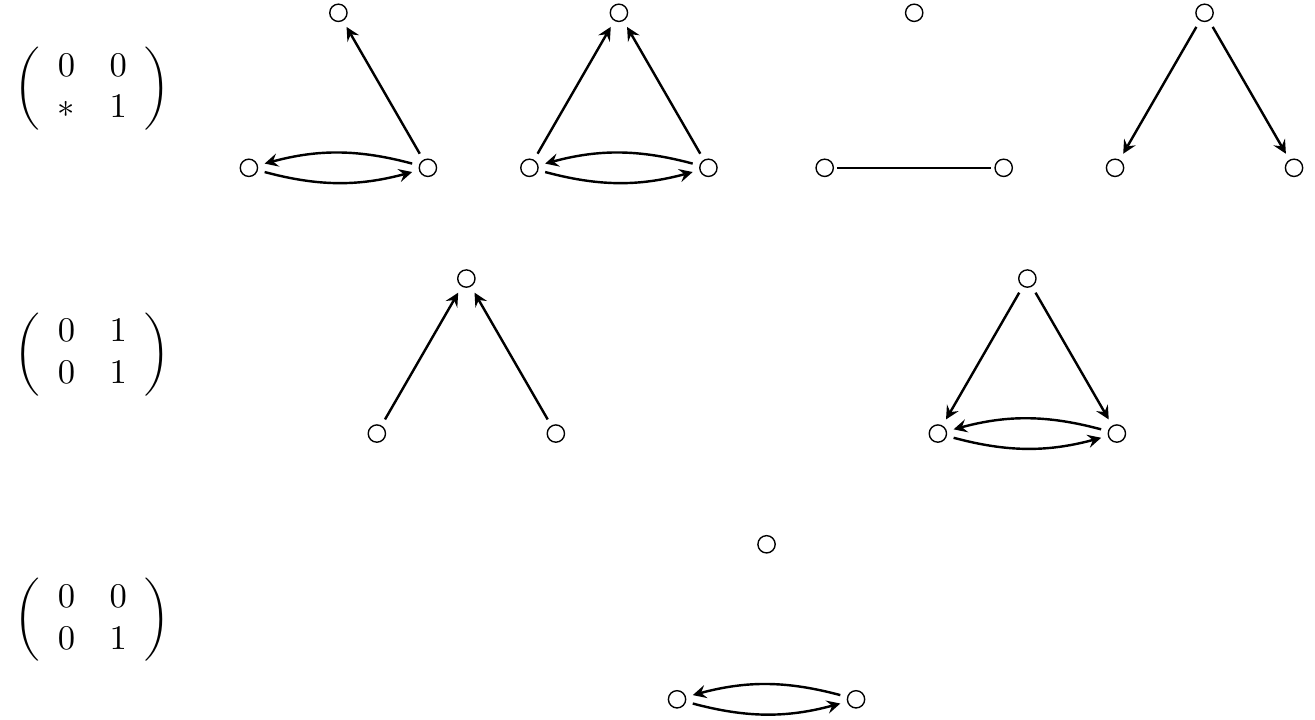}
\end{center}\caption{$M_i$-partitionable digraphs on $3$-vertices for $8 \le i \le 10$.} \label{01DF}
\end{figure}

\begin{theorem} \label{01D}
If $i \in \{8, 9, 10\}$, then the minimal $M_i$-obstructions
are precisely the digraphs in $\mathcal{F}_i$.
\end{theorem}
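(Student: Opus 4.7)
The plan is to establish, for each $i \in \{8, 9, 10\}$, the non-trivial direction that every $\mathcal{F}_i$-free digraph admits an $M_i$-partition; combined with the already-noted observation that each member of $\mathcal{F}_i$ is a minimal $M_i$-obstruction, this yields the claim. Since $M_i$-partitionability is a hereditary property, it suffices to show that a $\mathcal{F}_i$-free digraph cannot itself be a minimal obstruction.

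I would begin with $i = 10$, which is essentially a structural argument with no induction needed. A $\mathcal{F}_{10}$-free digraph $D$ contains no asymmetric arc, so $D = \overleftrightarrow{G_D}$ is a biorientation; forbidding the biorientations of $P_3$ and $K_3$ forces every component of $G_D$ to be a $K_1$ or a $K_2$, and forbidding $\overleftrightarrow{2K_2}$ restricts the number of $K_2$-components to at most one. Placing the isolated vertices in $V_1$ and the (possibly absent) $K_2$ in $V_2$ then yields an $M_{10}$-partition.

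For $i \in \{8, 9\}$ the plan is induction on $|V_D|$. The base cases $|V_D| \le 3$ are immediate from the construction of $\mathcal{F}_i$, which enumerates exactly the non-partitionable digraphs on three vertices (every digraph on at most two vertices is $M_i$-partitionable for $i \in \{8, 9\}$). For the inductive step, pick any $v$ in a $\mathcal{F}_i$-free $D$ of order $n \ge 4$; since $D - v$ is also $\mathcal{F}_i$-free, the inductive hypothesis supplies an $M_i$-partition $(V_1, V_2)$ of $D - v$. I would then try to extend this to $D$ by placing $v$ in $V_1$ or $V_2$, possibly after swapping a single auxiliary vertex between parts, in the same spirit as the $M_2$-argument of Theorem~\ref{0**0}. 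Whenever every such attempt fails, each failure pinpoints a specific arc or non-arc involving $v$; combining these witnesses with appropriate vertices from $V_1$ and $V_2$ should always produce an induced subdigraph in $\mathcal{F}_i$, contradicting $\mathcal{F}_i$-freeness.

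The main obstacle is the bookkeeping for $i = 8$. There, a single failure witness need not sit inside a three-vertex member of $\mathcal{F}_8$: the conflicting vertices may instead, together with $v$ and one additional vertex of the partition, induce a superorientation of $2K_2$, which is precisely why this infinite family was added to $\mathcal{F}_8$. Carefully enumerating the small number of possible failure types (insufficient digons between $v$ and $V_2$, unwanted arcs between $v$ and $V_1$, an arc from $V_1$ into $v$, and so on) and verifying in each case that the chosen witnesses induce the correct forbidden pattern with no spurious extra arcs is the technical heart of the argument.
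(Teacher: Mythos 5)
Your overall plan --- prove that every $\mathcal{F}_i$-free digraph is $M_i$-partitionable and combine this with the (already verified) minimality of the members of $\mathcal{F}_i$ --- is the right one, and your treatment of $i=10$ is complete and correct; it is in fact more direct than the paper's, which instead takes a split partition $(V_0,V_1)$ maximizing $|V_1|$ and argues that this extremal choice already yields an $M_{10}$-partition. One caution: taken literally, the paper's $\mathcal{F}_{10}$ would contain $\overleftrightarrow{K_3}$ and the biorientation of $\overline{K_3}$, both of which are $M_{10}$-partitionable; the only three-vertex biorientation that is genuinely a minimal obstruction is that of the path on three vertices. Your component argument survives this correction, since forbidding the bioriented path already forces every component of $G_D$ to be a clique, and $\overleftrightarrow{2K_2}$-freeness leaves at most one nontrivial clique component, but you should not lean on $\overleftrightarrow{K_3}$ being forbidden.

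For $i=8,9$ your route genuinely diverges from the paper's, and as written it has a gap. The paper does no induction on $|V_D|$. Its key idea, which you miss, is that every minimal $M_7$-obstruction (known from the cited work on strict split digraphs) contains a member of $\mathcal{F}_i$, so an $\mathcal{F}_i$-free digraph automatically admits a strict split partition $(V_0,V_1)$ with $V_0$ independent and $V_1$ a strong clique; one then only has to control the arcs between the two parts, which is a short case analysis (for $M_9$ the three-vertex obstructions force every cross pair to be an asymmetric arc from $V_0$ to $V_1$; for $M_8$ one distinguishes $|V_1|=1$ from $|V_1|\ge 2$ and in the first case moves a single vertex across). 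Your alternative --- delete a vertex $v$, take an $M_i$-partition of $D-v$, and repair a failed extension by one auxiliary swap --- is plausible in the style of the $M_2$ argument, but the decisive claim that every failure pattern yields an induced member of $\mathcal{F}_i$ is exactly the content of the theorem for these $i$, and you explicitly defer it (``should always produce an induced subdigraph in $\mathcal{F}_i$''). For $M_8$ in particular the partition of $D-v$ is not unique and it is not evident that a single swap always suffices, so until that enumeration of failure types is actually carried out, the cases $i=8,9$ remain a plan rather than a proof. Adopting the strict-split reduction would let you bypass that bookkeeping entirely.
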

%
%

\begin{proof}
It is not hard to verify that every minimal
$M_7$-obstruction is an element of
$\mathcal{F}_i$, or properly contains an
element of $\mathcal{F}_i$ as an induced
subdigraph, for $8 \le i \le 10$.   Thus, every
$\mathcal{F}_i$-free digraph is a strict split
digraph, for $8 \le i \le 10$.

Let $D$ be an $\mathcal{F}_8$-free digraph
with strict split partition $(V_0, V_1)$.   If
$V_D$ is a strong clique or an independent
set, then $D$ is $M_8$-partitionable.   Also,
every digraph on $2$ vertices is
$M_8$-partitionable, so let us assume that
$|V_D| \ge 3$ and $V_0 \ne \varnothing \ne
V_1$.   If $|V_1| = 1$, then it follows from the
fact that $D$ is $\mathcal{F}_8$-free that
either all the arcs between $V_0$ and
$V_1$ are oriented towards $V_1$, or
there is an unique arc $(v_1, v_0)$ between
them from $V_1$ to $V_0$.   In this case
$(V_0 \cup \{ v_1 \}, \{ v_0 \})$ is an
$M_8$-partition of $D$.   Otherwise, $|V_1|
\ge 2$, and it is easy to observe that every
arc between $V_0$ and $V_1$ must be
oriented from $V_1$ to $V_0$, because
$D$ is $\mathcal{F}_8$-free.   Thus, $(V_0,
V_1)$ is an $M_8$-partition of $D$.

Let $D$ be an $\mathcal{F}_9$-free digraph
with split partition $(V_0, V_1)$.   Since the
underlying graph of $D$ is $(K_1 + K_2)$-free,
and $D$ is $\mathcal{F}_9$-free, all the arcs
from $V_0$ to $V_1$ must be present in $D$,
and no arc from $V_1$ to $V_0$ can exist.
Therefore, $(V_0, V_1)$ is an $M_9$-partition
of $D$.

Finally, let $D$ be an $\mathcal{F}_{10}$-free
digraph with split partition $(V_0, V_1)$
maximizing the size of $V_1$.   Notice
that the asymmetric arc is an element of
$\mathcal{F}_{10}$, hence, $D$ is a biorientation
of its underlying graph $G_D$.   Thus, it suffices
to notice that $G_D$ is a split graph where the
path of length $2$ is forbidden.   It is easy to
verify that $(V_0, V_1)$ is an $M_{10}$-partition
of $D$, unless there is a vertex in $V_0$ which
is adjacent to every vertex in $V_1$.   But this
cannot happen as it would contradict the choice
of $(V_0, V_1)$.
\end{proof}


\begin{thebibliography}{99}
\bibitem{BJD}
	J.~Bang-Jensen and G.~Gutin,
	Digraphs Theory Algorithms and Applications Second Edition,
	Springer-Verlag London (2009).
	
\bibitem{BM}
	J.~A.~Bondy and U.~S.~R.~Murty,
	Graph Theory,
	Graduate Texts in Mathematics 244, Springer-Verlag Berlin (2008).

\bibitem{FHKM}
	T.~Feder, P.~Hell, S.~Klein and R.~Motwani,
	List partitions,
	SIAM J. Discrete Math. 16 (2003) 449--478.

\bibitem{HKerComp}
	H.~Galeana-S\'anchez and C.~Hern\'andez-Cruz,
	A dichotomy for the kernel by $H$-walks problem,
	Submitted (2016).

\bibitem{survey}
	P.~Hell,
	Graph partitions with prescribed patterns,
	European Journal of Combinatorics 35 (2014) 335--353.

\bibitem{PointDet}
	P.~Hell and C.~Hern\'andez-Cruz,
	Point determining digraphs, $\{0,1\}$-matrix partitions and dualities in full homomorphisms,
	Discrete Mathematics 338 (2015) 1755--1762.

\bibitem{strict}
	P.~Hell and C.~Hern\'andez-Cruz,
	Strict chordal and strict split digraphs,
	Discrete Applied Mathematics (2016), http://dx.doi.org/10.1016/j.dam2016.02.009

\end{thebibliography}
\end{document}